\DeclareMathOperator{\tb}{tb}
\DeclareMathOperator{\rot}{rot}
\DeclareMathOperator{\lk}{lk}
\DeclareMathOperator{\slf}{sl}
\newcommand{\R}{\mathbb{R}}
\newcommand{\Z}{\mathbb{Z}}
\newcommand{\N}{\mathbb{N}}
\newcommand{\xist}{\xi_{\mathrm{st}}}
\newcommand{\xiot}{\xi_{\mathrm{ot}}}
\newtheoremstyle{thm}{}{}{\itshape}{}{\bfseries}{\hfill\\}{ }{}
\newtheoremstyle{definition}{}{}{}{}{\bfseries}{\hfill\\}{ }{}
\theoremstyle{thm}
\newtheorem{Theorem}{Theorem}[section]
\newtheorem{thm}[Theorem]{Theorem}
\newtheorem{lem}[Theorem]{Lemma}
\newtheorem{cor}[Theorem]{Corollary}
\newtheorem*{Theorem-ohne}{Theorem}
\newtheorem{ques}[Theorem]{Question}
\theoremstyle{definition}
\newtheorem{defi}[Theorem]{Definition}
\newtheorem{rem}[Theorem]{Remark}
\newtheorem{ex}[Theorem]{Example}
\begin{document}


\title[The knot complement problem for transverse knots]{Cosmetic contact surgeries along transverse knots and the knot complement problem}

\author{Marc Kegel}

\address{Institut f\"ur Mathematik, Humboldt-Universit\"at zu Berlin,
Unter den Linden 6, 10099 Berlin, Germany}
\email{kegemarc@math.hu-berlin.de}

\date{\today}


\begin{abstract}
We study cosmetic contact surgeries along transverse knots in the standard contact $3$-sphere, i.e. contact surgeries that yield again the standard contact $3$-sphere. The main result is that we can exclude non-trivial cosmetic contact surgeries along all transverse knots not isotopic to the transverse unknot with self-linking number $-1$. 

As a corollary it follows that every transverse knot in the standard contact $3$-sphere is determined by the contactomorphism type of its exteriors. Moreover, we give counterexamples to this for transverse links in the standard contact $3$-sphere.
\end{abstract}

\date{\today} 

\keywords{transverse knots, knot complement problem, contact surgery} 

\subjclass[2010]{53D35; 53D10, 57M25, 57M27, 57R65, 57N40} 

\maketitle


\section{Introduction}

\textit{Dehn surgery} is the process of removing a tubular neighborhood of a (tame) knot in the $3$-sphere $S^3$ and gluing a solid torus back by a homeomorphism of the boundaries to get a new $3$-manifold. This was first used in 1910 by Dehn for easy and effective constructions of homology spheres~\cite{De10}. In fact, a fundamental theorem due to Lickorish and Wallace says that every connected closed oriented $3$-manifold can be obtained by Dehn surgery along a \textbf{link} from $S^3$~\cite{Li62,Wa60}.

This connection between knot theory in $S^3$ and $3$-manifold topology in general lead to many interesting results in both areas. One of the most striking applications of Dehn surgery to knot theory is the \textit{knot complement theorem} for knots in $S^3$. In 1908 Tietze asked if a knot in $S^3$ is determined by the homeomorphism type of its complement~\cite{Ti08}. While this is not true for links in $S^3$~\cite{Wh37} and for knots in general $3$-manifolds~\cite{Ma92,Ro93}, the original question remained a long time open, until it was spectacularly answered in the affirmative by Gordon and Luecke in 1989~\cite{GoLu89}.

Gordon and Luecke proved a slightly more general statement about Dehn surgery, namely that if a non-trivial Dehn surgery along a single knot $K$ in $S^3$ gives back $S^3$ (a so-called \textit{cosmetic} Dehn surgery), then the knot $K$ has to be equivalent to the unknot. From this it follows very easily that every knot in $S^3$ is determined by its exterior and, by work of Edwards~\cite{Ed64}, also by its complement.

The connection between these two different results is as follows. Given the meridian on the boundary of a knot exterior, it is easy to reconstruct the knot. First one can glue a $2$-disk $D^2$ to the meridian in a unique way. Then the boundary of the resulting object is a $2$-sphere $S^2$ to which one can glue a $3$-ball $B^3$ in a unique way. Therefore there is a unique way to glue a solid torus to the boundary of a knot exterior, such that the meridian of this solid torus maps to the meridian of the knot. The knot is then recovered as the spine of this newly glued-in solid torus. With this discussion the knot complement problem reduces to the question, if the meridian on a knot exterior is uniquely determined. Or asked in other words, in how many different ways one can glue a solid torus to the boundary of a knot exterior to get $S^3$ back, which is nothing but the existence question of cosmetic Dehn surgeries.

Here we want to study similar questions for transverse knots in contact $3$-manifolds. A (coorientable) \textit{contact structure} on a smooth $3$-manifold $M$ is a \textit{completely non-integrable} $2$-plane field $\xi\subset TM$, meaning that there exists a globally defined $1$-form $\alpha$ on $M$, such that $\alpha\wedge d\alpha$ is a volume form of $M$. The so-called standard contact structure $\xist$ on $S^3\subset \R^4$ is given in Cartesian coordinates as $\xist=\ker(x_1\,dy_1-y_1\,dx_1+x_2\,dy_2-y_2\,dx_2)$. A smooth knot $T$ in a contact manifold $(M,\xi)$ is called \textit{transverse}, if it is always transverse to the contact structure. It is easy to see that every topological knot can be $C^0$-close approximated by a transverse knot and that transverse knots look locally the same (in a so called \textit{standard} tubular neighborhood). For this and other background on contact topology we refer the reader to the standard textbook~\cite{Ge08}.

By using that every transverse knot looks locally the same, Martinet could make Dehn surgery along transverse knots compatible with the contact structures~\cite[Theorem~4.1.2]{Ge08}. Together with the theorem of Lickorish and Wallace it follows that every closed oriented $3$-manifold carries a contact structure. In fact, every contact structure on any closed $3$-manifold can be obtained by contact surgery along a transverse link from $(S^3,\xist)$~\cite{Ga99,Ga02,DiGe04,BaEt13,Co15}.

However, the notion of contact Dehn surgery that we use here is slightly more general, in the sense that we remove a standard tubular neighborhood of a transverse knot and then glue back some arbitrary contact solid torus, rather than a preferred on.

In this article, we study cosmetic contact Dehn surgeries along transverse knots in $(S^3,\xist)$, i.e. contact Dehn surgeries yielding again $(S^3,\xist)$. From a result of Etnyre and Ghrist~\cite[Theorem 2.2]{EtGh99} it follows, that cosmetic contact surgeries do exist along a transverse unknot with self-linking number $\slf=-1$. In contrast to this, is our main result.

\begin{thm} [Cosmetic contact surgeries along stabilized transverse unknots] \label{thm:transverseSurgery}
Any contact Dehn surgery with topological framing $\pm 1/n$ along a stabilized transverse unknot yields an overtwisted $3$-sphere. 	
\end{thm}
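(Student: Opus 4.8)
The plan is to prove that any contact Dehn surgery with topological framing $\pm 1/n$ along a stabilized transverse unknot yields an overtwisted contact sphere. I would approach this by working directly with the self-linking number and the classification of transverse unknots, exploiting the fact that a stabilized transverse unknot is exactly a transverse unknot with self-linking number $\slf < -1$. The key idea is to analyze the Thurston--Bennequin invariant of the dual knot (the core of the surgery torus) and show that the resulting contact manifold must contain an overtwisted disk.

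First I would set up the relationship between the transverse surgery picture and the Legendrian surgery picture. A standard tool is to realize a transverse knot as the boundary of an annulus or, via the transverse pushoff correspondence, to relate contact surgery along a transverse knot to Legendrian surgery along a nearby Legendrian knot whose positive/negative transverse pushoff is the given transverse knot. Since a stabilized transverse unknot has $\slf \le -3$, the corresponding Legendrian unknots realizing it must themselves be stabilized, hence have Thurston--Bennequin invariant strictly less than the maximal value $\tb_{\max} = -1$ for the unknot. Concretely I would fix a Legendrian unknot $L$ whose transverse pushoff is the stabilized transverse unknot $T$, and translate the ``topological framing $\pm 1/n$'' condition into the contact surgery coefficient on $L$.

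The main computational step is to identify, for the glued-in contact solid torus, the characteristic foliation and the dividing set on a convex torus parallel to the boundary, and then to track how the $\pm 1/n$ framing positions the dividing slope relative to the meridian. Here I would invoke the Etnyre--Ghrist type analysis (or the Ding--Geiges--Stipsicz conversion between contact and smooth surgery coefficients) to show that the reglued contact structure, compared with the tight standard one, differs by a nontrivial amount of twisting that forces a bypass/overtwisted disk. The cleanest route is to exhibit an explicit overtwisted disk: after $\pm 1/n$ surgery the Seifert disk of the original unknot, which the stabilization has made non-standard, caps off in the surgered manifold to produce a disk whose boundary is transverse with the wrong sign of self-linking, which by the Bennequin-type inequality is impossible in a tight contact structure, so the surgered sphere must be overtwisted.

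The hard part will be making the bypass/overtwisting argument rigorous rather than merely plausible: I expect the main obstacle to be controlling precisely \emph{which} contact solid torus gets glued in and verifying that \emph{every} such gluing compatible with the $\pm 1/n$ topological framing produces overtwisting, rather than just one convenient choice. In particular I would need to rule out the possibility that some exotic tight contact structure on the solid torus could conspire to keep the result tight. I anticipate handling this by a Giroux convexity argument, classifying tight contact structures on the complement via the dividing set, and checking that the self-linking number $\slf(T) < -1$ genuinely obstructs tightness of the closed-up manifold through the transverse Bennequin inequality $\slf(T) \le 2g(\Sigma) - 1$ applied to the capped-off disk in $(S^3,\xist)$.
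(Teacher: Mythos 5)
There are two genuine gaps in your plan, and both are at the points you yourself flag as ``the hard part.'' First, your proposed overtwistedness certificate --- capping off the Seifert disk to get a transverse unknot violating $\slf\le 2g-1$ --- cannot work uniformly in $n$. The paper's actual mechanism is to place an auxiliary Legendrian knot in the exterior of $T$, carry it into the surgered sphere, and compute its new Thurston--Bennequin invariant via $\tb_{new}=\tb_{old}-n\lk^2(L,T)$. For $n<0$ a Legendrian meridian ($\lk=1$) gets $\tb_{new}=-1-n>-1$ and the Bennequin inequality finishes the job, roughly as you envision. But for $n>0$ every such naive estimate fails: the natural companion knot (forming a Whitehead link with $T$, $\lk=0$) keeps $\tb_{new}=-1$ while becoming a twist knot of genus $1$, so the Bennequin bound $\tb\le 2g-1=1$ detects nothing. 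The paper must invoke the sharper Kauffman-polynomial bound of Rudolph/Ferrand, in the Tanaka--Yokota form for reduced alternating diagrams, to get $\tb(L'_n)\le -3$ and hence a contradiction. Your proposal contains no substitute for this finer input, and without it the case $n>0$ is simply open.

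Second, and more fundamentally, your argument never uses the stabilization hypothesis in a way that would fail for the unknot $U_{-1}$ with $\slf=-1$ --- yet by Etnyre--Ghrist (Theorem~2.6 of the paper) cosmetic contact $1/n$-surgeries \emph{do} exist along $U_{-1}$, so any correct proof must break precisely there. The point where stabilization enters in the paper is the \emph{size} of the standard tubular neighborhood: identifying $(\nu T,\xist)$ with $\bigl(S^1\times D^2_\varepsilon,\ker(d\theta+r^2\,d\varphi)\bigr)$ framed by the surface longitude, one shows via a convex torus inside $\nu T$ and Honda's classification of tight solid tori that $\varepsilon\le 1$ whenever $\slf(T)\le -3$, whereas $U_{-1}$ admits neighborhoods with $\varepsilon\ge 1$. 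Only this bound guarantees that the auxiliary Legendrian knots can be realized in \emph{every} exterior of a stabilized transverse unknot; for large neighborhoods of $U_{-1}$ they cannot, which is consistent with Etnyre--Ghrist. Your plan to ``classify tight contact structures on the complement via the dividing set'' is in the right spirit (and the referee notes such a direct convex-surface proof exists), but as written it does not locate where $\slf\le-3$ versus $\slf=-1$ makes the difference, so the argument as proposed would prove too much.
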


Using the topological result of Gordon and Luecke mentioned above it is easy to deduce from Theorem~\ref{thm:transverseSurgery} the following generalizations of the results by Gordon and Luecke.

\begin{cor} [Transverse contact surgery theorem]\label{thm:transverseSurgery2}
Let $T$ be a transverse knot in $(S^3,\xist)$ not isotopic to a transverse unknot with $\slf=-1$. Then there exists no non-trivial contact Dehn surgery along $T$ yielding $(S^3,\xist)$.
\end{cor}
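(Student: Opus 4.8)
The plan is to reduce Corollary~\ref{thm:transverseSurgery2} to the topological knot complement theorem of Gordon--Luecke together with Theorem~\ref{thm:transverseSurgery}, by carefully analyzing what a cosmetic contact surgery forces on both the underlying topological knot and the transverse isotopy class. Suppose $T$ is a transverse knot in $(S^3,\xist)$ admitting a non-trivial contact Dehn surgery that yields $(S^3,\xist)$ again. Forgetting the contact structure, this is in particular a Dehn surgery on the underlying topological knot $K$ of $T$ that returns the \emph{smooth} $3$-sphere $S^3$. First I would invoke Gordon--Luecke: either the surgery is trivial (the meridional slope, which I must argue corresponds to a trivial contact surgery and is excluded), or $K$ is the unknot. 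So the whole question collapses to the case where $T$ is a transverse representative of the topological unknot.

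Once $K$ is the unknot, I would pin down the topological framing of the surgery. Transverse knots carry a classical invariant, the self-linking number $\slf$, and for the transverse unknot the self-linking number is $-1$ minus twice the number of stabilizations; the hypothesis excludes exactly the unstabilized transverse unknot with $\slf=-1$, so $T$ is a \emph{stabilized} transverse unknot. The next step is to determine which topological surgery coefficients along the unknot even return $S^3$ smoothly: these are precisely the slopes $\pm 1/n$ (surgeries on the unknot give lens spaces $L(p,q)$, and the only ones homeomorphic to $S^3$ are $p/q=\pm 1/n$, including the trivial meridional case $n=0$ suitably interpreted). This is where Theorem~\ref{thm:transverseSurgery} enters directly: it asserts that any contact Dehn surgery with topological framing $\pm 1/n$ along a \emph{stabilized} transverse unknot produces an \emph{overtwisted} $3$-sphere, never $(S^3,\xist)$. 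That contradicts the assumption that our surgery was cosmetic in the contact category.

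Assembling these pieces: the only surviving possibility is that $K$ is the unknot, the framing is some $\pm 1/n$, $T$ is a transverse unknot, and $T$ is stabilized --- but Theorem~\ref{thm:transverseSurgery} rules that out, while $T$ unstabilized with $\slf=-1$ is exactly the excluded case. Hence no non-trivial cosmetic contact surgery can exist along $T$, which is the claim. The only remaining subtlety I would address is the meaning of ``non-trivial'': I must confirm that a surgery with meridional topological framing corresponds to the trivial contact surgery (re-gluing the standard neighborhood), so that excluding it is consistent with the statement, and that every genuinely non-trivial contact surgery has a well-defined nonzero topological framing to feed into the two cited results.

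I expect the main obstacle to be the bookkeeping that translates between the two notions of ``surgery'' in play. The topological results (Gordon--Luecke, and the lens-space classification) speak about smooth Dehn surgery coefficients, whereas the contact setup glues in an \emph{arbitrary} contact solid torus rather than a preferred one. The delicate point is verifying that a cosmetic contact surgery, after forgetting $\xi$, really is an honest smooth Dehn surgery along $K$ with a topological framing in $\{\pm 1/n\}$, and that the ``non-trivial'' hypothesis on the contact side matches non-triviality of the underlying smooth surgery. Once that dictionary is in place, the deduction is essentially formal, with Theorem~\ref{thm:transverseSurgery} supplying the one new ingredient beyond the classical picture.
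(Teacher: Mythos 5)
Your proposal is correct and follows essentially the same route as the paper: apply Gordon--Luecke to conclude the underlying knot is the unknot, use the homology computation to restrict the coefficient to $1/n$, note that Eliashberg's classification forces $T$ to be a stabilized transverse unknot under the hypothesis, and then invoke Theorem~\ref{thm:transverseSurgery}. The bookkeeping points you flag (meridional framing corresponding to trivial contact surgery, and the underlying smooth surgery being well defined) are handled implicitly in the paper's one-line proof, so there is no substantive difference.
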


As in the topological category, we see that two isotopic transverse knots have contactomorphic \textit{exteriors}, i.e.\ complements of open standard tubular neighborhoods. The reverse implication is non-trivial and can be answered for transverse knots in $(S^3,\xist)$ using Corollary~\ref{thm:transverseSurgery2}.

\begin{cor} [Transverse knot exterior theorem] \label{cor:transverse} 
Two transverse knots in $(S^3,\xist)$ with contactomorphic exteriors are isotopic.
\end{cor}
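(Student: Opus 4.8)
The plan is to mirror the topological argument sketched in the introduction, which reduced the knot complement problem to the non-existence of cosmetic surgeries, now in the contact category using Corollary~\ref{thm:transverseSurgery2}. Let $T_1$ and $T_2$ be two transverse knots in $(S^3,\xist)$ and suppose $\phi\co (S^3\setminus\nu T_1,\xist)\to (S^3\setminus\nu T_2,\xist)$ is a contactomorphism of their exteriors, where $\nu T_i$ denotes an open standard tubular neighborhood. I would first treat the two exceptional cases separately: if one of the $T_i$ is a transverse unknot with $\slf=-1$, I expect its exterior to be a tight contact solid torus, and I would argue that the contactomorphism type of a solid torus forces the other knot to be the same unknot, so assume from now on that neither $T_i$ is a transverse unknot with $\slf=-1$.

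The main step is to promote the boundary contactomorphism into a \emph{cosmetic contact surgery} statement. Gluing back the removed standard neighborhood $\nu T_2$ to the exterior $S^3\setminus\nu T_2$ recovers $(S^3,\xist)$ with $T_2$ as the core. Transporting this gluing across $\phi^{-1}$ exhibits $(S^3,\xist)$ as the result of gluing \emph{some} contact solid torus to $S^3\setminus\nu T_1$ along $\partial(\nu T_1)$, i.e.\ as the result of a contact Dehn surgery along $T_1$ in the generalized sense used in this article. By Corollary~\ref{thm:transverseSurgery2}, since $T_1$ is not the $\slf=-1$ unknot, \textbf{every} contact Dehn surgery along $T_1$ yielding $(S^3,\xist)$ must be the trivial one; that is, the glued-in solid torus is (contact-)isotopic to the standard neighborhood $\nu T_1$ and, in particular, its meridian matches the meridian of $T_1$.

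It then remains to recover the knot from this data, exactly as in the topological reduction described in the introduction: once the meridian on $\partial(\nu T_1)$ is pinned down, the solid torus that fills it is unique up to isotopy, and the core of that filling is isotopic to $T_1$. Tracking $\phi$ through this reconstruction shows that $\phi$ carries (a curve isotopic to) the meridian of $T_1$ to the meridian of $T_2$, hence carries the core $T_1$ to the core $T_2$. Extending $\phi$ over the glued-in neighborhoods then gives a contactomorphism of $(S^3,\xist)$ taking $T_1$ to $T_2$; invoking that orientation-preserving self-contactomorphisms of $(S^3,\xist)$ are contact isotopic to the identity upgrades this to a transverse isotopy $T_1\simeq T_2$, as desired.

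I expect the main obstacle to lie in the promotion step, specifically in verifying that the abstract contactomorphism $\phi$ of exteriors genuinely realizes a \emph{contact} Dehn surgery in the precise sense to which Corollary~\ref{thm:transverseSurgery2} applies. One must check that the contact structure $\xist$ on $S^3\setminus\nu T_2$, pulled back via $\phi$, extends across $\nu T_1$ as an honest contact solid torus (so that the hypotheses of a contact surgery are met), and that the resulting topological framing is among those the corollary controls. Handling the $\slf=-1$ unknot exception cleanly, and confirming that no contactomorphism of exteriors can exchange that unknot with a knot of different type, is the other delicate point, since there the corollary gives no information and the argument must instead exploit the special structure of the solid-torus exterior directly.
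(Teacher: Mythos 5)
Your main step is exactly the one taken in the proof in Section~\ref{section:proofKnotExterior}: choose the glued-in solid torus to be a genuine standard neighborhood, transport the gluing across the exterior contactomorphism, and read off a cosmetic contact Dehn surgery to which Corollary~\ref{thm:transverseSurgery2} applies. The worry you raise about the promotion step is resolved exactly as you suspect: one glues the honest contact solid torus $(\nu T_1,\xist)$ to the exterior of $T_2$ along the boundary contactomorphism $h$; since $\operatorname{Id}$ and $h$ both preserve the boundary characteristic foliations, the two maps assemble to a contactomorphism of the glued contact manifolds by~\cite[Section~2.5.4]{Ge08}, so no separate extension problem arises and the topological framing is whatever slope $h$ sends the meridian to, which is precisely what the corollary controls.

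The genuine gap is in your treatment of the exceptional case. The exterior of \emph{every} transverse unknot $U_{\slf}$ ($\slf=-1,-3,-5,\dots$) is a tight contact solid torus, so ``the contactomorphism type of a solid torus'' cannot by itself force the other knot to be $U_{-1}$: a priori the exteriors of $U_{-1}$ and $U_{-3}$ could be abstractly contactomorphic, the distinguishing data being where the meridian sits on the boundary --- which is exactly what the surgery argument is designed to track, and which your separate solid-torus analysis does not. The proof in Section~\ref{section:proofKnotExterior} avoids any such analysis by running the surgery argument symmetrically: Corollary~\ref{thm:transverseSurgery2} gives the dichotomy ``the surgery slope is the meridian, whence the glued contactomorphism carries $T_1$ to $T_2$, or $T_2$ is equivalent to $U_{-1}$'', and reversing the roles of $T_1$ and $T_2$ gives the same dichotomy with $T_1$ in place of $T_2$. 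The only case not settled by a trivial surgery is therefore the one where \emph{both} knots are transverse unknots with $\slf=-1$, and these are equivalent by Eliashberg's classification of transverse unknots. Restructured this way, your argument closes. (A minor further point: Corollary~\ref{thm:transverseSurgery2} controls only the surgery slope, not the contact isotopy class of the glued-in solid torus; but meridian-matching is all the reconstruction step uses, as in the topological reduction, so your phrasing ``contact-isotopic to the standard neighborhood'' claims slightly more than is needed or proved.)
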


In Section~\ref{section:prelim} we provide the necessary background about transverse knots in contact $3$-manifolds and contact Dehn surgery along transverse knots. Theorem~\ref{thm:transverseSurgery} and its corollaries are proven in Section~\ref{section:proofDehnSurgery} and~\ref{section:proofKnotExterior}. We conclude in Section~\ref{section:open} with explicit counterexamples to Corollary~\ref{cor:transverse} in the case of transverse links in $(S^3,\xist)$ and state a few related open problems.

For Legendrian knots Corollary~\ref{cor:transverse} is also mentioned in~\cite[Theorem~2.13]{Et05}, see also~\cite{Ke16}.

\begin{rem} [Direct proof via convex surface theory]
I was informed by the referee that Corollaries~\ref{thm:transverseSurgery2} and~\ref{cor:transverse} can be proven directly with known techniques due to Honda~\cite{Ho00}. In this form Corollaries~\ref{thm:transverseSurgery2} and~\ref{cor:transverse} has been known to some experts for some time.
\end{rem}

\subsection*{Acknowledgment.}
I would like to thank my advisor Hansj\"org Geiges, my colleagues Sebastian Durst and Christian Evers for useful discussions and comments on an earlier version of this article. Further I would like to thank Kai Zehmisch for his  explanations about the size of standard neighborhoods of transverse knots and the anonymous referee for explaining how to remove the conditions on the size of these neighborhoods from the results and pointing out a mistake in Section~\ref{section:open} in an earlier version.

 The results presented here are part of the authors thesis~\cite{Ke17}, which was partially supported by the \textit{DFG-Graduiertenkolleg 1269 Globale Strukturen in Geometrie und Analysis} at the Universit\"at zu K\"oln. The research of the author is now supported by the \textit{Berlin Mathematical School}.


\section{Preliminaries}
\label{section:prelim} 

In this section we recall the necessary background about transverse knots and contact Dehn surgery along them.

\subsection{Transverse knots in contact 3-manifolds}\hfill

A knot $T$ in a contact $3$-manifold $(M,\xi)$ with cooriented contact structure is called \textbf{transverse}, if the contact planes $\xi$ are everywhere transverse to the tangent space of the knot. Transverse knots admit a preferred orientation by the requirement that they are positive transverse to the contact structure. Here all transverse knots are understood to be oriented like this, but all results holds similar also for negative transverse knots. An important fact about transverse knots is that they locally all look the same~\cite[Example~2.5.16]{Ge08}.

\begin{lem} [Neighborhoods of transverse knots]  
Let $T$ be a transverse knot in $(M,\xi)$. Then there exists a \textbf{standard tubular neighborhood} $\nu T$ of $T$ in $M$ such that $(\nu T,\xi)$ is contactomorphic to 
\begin{equation*}
\big(S^1\times D^2_\varepsilon , \ker(d\theta+r^2\,d\varphi)\big),
\end{equation*} 
where $\theta$ is an angular coordinate on $S^1$ and $(r,\varphi)$ are polar coordinates on the disk $D^2_\varepsilon$ with radius $\varepsilon$ and the contactomorphism sends $T$ to $S^1\times \{0\}$.
\end{lem}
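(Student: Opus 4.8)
The statement is the standard neighborhood theorem for transverse knots, and the natural route is a relative version of Moser's trick (Gray stability). The plan is to write down an explicit model contact form near the core circle, to match it with the given contact structure to first order along $T$, and then to remove the remaining discrepancy by an isotopy that fixes $T$ pointwise.

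First I would fix a contact form $\alpha$ with $\xi=\ker\alpha$ and choose a tubular neighborhood embedding $\Psi\co S^1\times D^2\to M$ with $\Psi(S^1\times\{0\})=T$, providing coordinates $(\theta,x,y)$ in which the model form is $\alpha_0=d\theta+x\,dy-y\,dx=d\theta+r^2\,d\varphi$. One checks directly that $\alpha_0$ is contact everywhere, that its core $S^1\times\{0\}$ is positively transverse to $\ker\alpha_0$, and that $d\alpha_0=2\,dx\wedge dy$. Writing $\beta:=\Psi^*\alpha-\alpha_0$, the aim of the first step is to arrange that both $\beta$ and $d\beta$ vanish along the core circle.

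Achieving this matching is the delicate part. Since $T$ is transverse, $\alpha$ restricts to a nowhere-zero $1$-form on $T$, so after rescaling $\alpha$ by a positive function (which does not change $\xi$) I can assume $\Psi^*\alpha=d\theta$ along $S^1\times\{0\}$, giving $\beta=0$ on the core. To get $d\beta=0$ there I use that $d\alpha$ restricts to a symplectic form on the oriented rank-two bundle $\xi|_T\to S^1$. A priori $d(\Psi^*\alpha)$ has, along the core, a nonzero $dx\wedge dy$ coefficient together with mixed $dx\wedge d\theta$ and $dy\wedge d\theta$ terms; by trivializing $\xi|_T$, rescaling the normal coordinates by a $\theta$-dependent positive factor, and shearing them appropriately, I can arrange $d(\Psi^*\alpha)=2\,dx\wedge dy=d\alpha_0$ at every point of the core, so that $d\beta=0$ there as well. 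I expect this first-order normalization of the framing and of the conformal symplectic data on $\xi|_T$ to be the main obstacle; the remaining argument is formal.

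Finally I would run Moser's trick relative to $T$. On a sufficiently small neighborhood of the core the interpolation $\alpha_t:=\alpha_0+t\beta$ is a contact form for all $t\in[0,1]$: along the core $\alpha_t=\alpha_0$ and $d\alpha_t=d\alpha_0$ (because $\beta$ and $d\beta$ vanish there), so $\alpha_t\wedge d\alpha_t\neq 0$ on the core and hence nearby. I look for an isotopy $\psi_t$ with $\psi_0=\mathrm{id}$ and $\psi_t^*\alpha_t=\lambda_t\,\alpha_0$ for positive functions $\lambda_t$; differentiating yields the equation $\mathcal{L}_{X_t}\alpha_t+\dot\alpha_t=\mu_t\,\alpha_t$ for the generating vector field $X_t$. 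Imposing $\iota_{X_t}\alpha_t=0$ and using Cartan's formula, this is solved by taking $\mu_t=\dot\alpha_t(R_t)$ with $R_t$ the Reeb field of $\alpha_t$, and then determining $X_t\in\ker\alpha_t$ from $\iota_{X_t}d\alpha_t=\mu_t\alpha_t-\dot\alpha_t$ via the non-degeneracy of $d\alpha_t$ on $\ker\alpha_t$. Since $\dot\alpha_t=\beta$ vanishes along the core, so do $\mu_t$ and $X_t$; hence the flow of $X_t$ exists for all $t\in[0,1]$ on a possibly smaller neighborhood and fixes $T$ pointwise. The time-one map then satisfies $\psi_1^*(\Psi^*\alpha)=\lambda_1\,\alpha_0$, so $(\Psi\circ\psi_1)^*\xi=\ker\alpha_0$, and restricting $\Psi\circ\psi_1$ to a small $S^1\times D^2_\varepsilon$ gives the asserted contactomorphism carrying the core $S^1\times\{0\}$ to $T$.
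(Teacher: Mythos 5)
This is the standard relative Moser/Gray argument for the transverse neighborhood theorem, which is precisely the proof behind the reference \cite[Example~2.5.16]{Ge08} that the paper cites in place of its own proof, so your route matches the paper's. One small correction: rescaling $\alpha$ by a positive function only normalizes the $d\theta$-coefficient of $\Psi^*\alpha$ along the core and cannot kill its $dx$- and $dy$-components there; to get $\Psi^*\alpha=d\theta$ on $S^1\times\{0\}$ you must in addition choose the tubular neighborhood so that the normal disks $\{\theta\}\times D^2$ are tangent to $\xi$ along $T$ (i.e.\ identify the normal bundle of $T$ with $\xi|_T$, which is possible exactly because $T$ is transverse), after which your symplectic trivialization of $(\xi|_T,d\alpha)$ and the relative Moser step go through as written.
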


\begin{rem}[Size of the standard neighborhoods]
We can even assume the above neighborhood $(\mathring{\nu T},\xi)$ to be contactomorphic to $(S^1\times \R^2, \ker(d\theta+r^2\,d\varphi))$~\cite{El91}. For that consider, for $N\in\N$, the map
\begin{align*}
F_N\colon\big(S^1\times \R^2 , \ker(d\theta+r^2\,d\varphi)\big)&\longrightarrow\big(S^1\times D^2_{1/\sqrt{N}} , \ker(d\theta+r^2\,d\varphi)\big)\\
\big(\theta,r,\varphi\big)&\longmapsto\big(\theta,\rho(r)r,\varphi-N\theta\big),
\end{align*}
with 
\begin{equation*}
\rho(r)=\frac{1}{\sqrt{1+Nr^2}},
\end{equation*} 
which can be easily computed to be a contactomorphism. 

However, the map $F_N$ is topologically a $(-N)$-fold Dehn-twist along the meridian of $T$. Therefore, the framings of the two neighborhoods differ. It is possible to define invariants of transverse knots (or even topological knots) coming from the size of their tubular neighborhoods by looking at the possible values of the slope of the characteristic foliation of $\partial(\nu T)$ (for a fixed choice of framing of $T$), as explained for example in~\cite{Ga99,Ga02,EtHo05,BaEt13,Co15}. 

In the present article we will come across a similar notion of the size of a standard tubular neighborhood $\nu T$ of a transverse knot $T$, see Section~\ref{section:proofDehnSurgery}.
\end{rem}

\begin{defi}[Complements and exteriors]  
Let $T$ be a transverse knot in $(M,\xi)$. Then one calls $(M\setminus T,\xi)$ the \textbf{complement} of $T$ and $(M\setminus \mathring{\nu T},\xi)$ an \textbf{exterior} of $T$, where $\nu T$ denotes, as always, a standard tubular neighborhood of $T$.
\end{defi}

Here I want to consider transverse knots up to coarse equivalence.

\begin{defi}[Coarse equivalence]  
Let $T_1$ and $T_2$ be two transverse knots in $(M,\xi)$. Then $T_1$ is \textbf{(coarse) equivalent} to $T_2$, if there exists a contactomorphism $f$ of $(M,\xi)$
\begin{align*}
	f\colon (M,\xi)&\longrightarrow (M,\xi)\\
	T_1 &\longmapsto T_2
\end{align*}
that maps $T_1$ to $T_2$.
\end{defi}

\begin{rem}[Coarse equivalence vs transverse isotopy]
The (coarse) equivalence is in general a weaker condition than the equivalence given by transverse isotopy, since not every contactomorphism has to be isotopic to the identity. For example in overtwisted contact structures on $S^3$ this is in general not the case~\cite{Vo16} (compare also the discussion in Section~4.3 in~\cite{ElFr09}). But it is known that in $(S^3,\xi_{st})$ this two concepts are the same, since every contactomorphism of $(S^3,\xi_{st})$ is isotopic to the identity~\cite{El92}.
\end{rem}

If two transverse knots are coarse equivalent, their complements and exteriors are contactomorphic. The transverse knot complement or exterior problem asks if the reverse also holds. In Section~\ref{section:proofKnotExterior} we will prove Corollary~\ref{cor:transverse} saying that this is true for transverse knots in $(S^3,\xist)$. On the other hand, we will give counterexamples for transverse links in $(S^3,\xist)$ in Section~\ref{section:open}.

\subsection{Contact Dehn surgery along transverse knots}\hfill

Next, I recall the definition of contact Dehn surgery along transverse knots. Roughly speaking one cuts out the neighborhood of a transverse knot and glues a contact solid torus back in a different way to obtain a new contact $3$-manifold. More precisely:

\begin{defi}[Contact Dehn surgery along transverse knots]  \label{defi:cs}
Let $T$ be a transverse knot in $(M,\xi)$. Take a standard tubular neighborhood $\nu T$ and a non-trivial simple closed curve $r$ on $\partial (\nu T)$ and a diffeomorphism $\varphi$, such that
\[\begin{array}{ccc}
\varphi\colon \partial(S^1\times D^2)&\longrightarrow& \partial (\nu T)\\
	\{\text{pt}\}\times\partial D^2 :=\mu_0 &\longmapsto& r.
\end{array}\]
Then define
\[\begin{array}{rccccl}
\big(M_T(r),\xi_T(r)\big)&:=& \big(S^1\times D^2,\xi_S\big) &+& \big(M\setminus\mathring{\nu T},\xi\big)&\big/_\sim ,\\
&&\partial(S^1 \times D^2)\ni p&\sim& \varphi(p)\in \partial(\nu T),&
\end{array}\]
where the contact structure $\xi_S$ on $S^1\times D^2$ is chosen, such that $\xi_S$ and $\xi$ fit together to a global new contact structure $\xi_T(r)$ on $M_T(r)$.
One says that $(M_T(r),\xi_T(r))$ is obtained out of $M$ by \textbf{contact Dehn surgery} along $T$ with \textbf{slope}~$r$.
\end{defi}

It is easy to show that $M_T(r)$ is again a $3$-manifold independent of the choice of $\varphi$. Also it is a standard fact that every nullhomologous knot $T$ (for example if $M$ is $S^3$) has a \textbf{Seifert surface}, i.e. a compact oriented surface $S$ with oriented boundary $T$. Then one can obtain a special parallel copy of $T$ on $\partial (\nu T)$, called the \textbf{surface longitude}, by pushing $T$ in a Seifert surface $S$. (This is independent of the choice of the Seifert surface.) Then one can write any non-trivial simple closed curve $r$ uniquely as $r=p\mu+q\lambda$ with $\mu$ the meridian of $\nu T$ and $p$ and $q$ coprime integers. Therefore, it is often easier to think of $r$ instead of a simple closed curve as a rational number $r=p/q\in\mathbb{Q}\cup\{\infty\}$, where $\infty$ means $p=1$ and $q=0$. This rational number is called \textbf{surgery coefficient}.

Also it is not hard to show that there exists always a contact structure $\xi_S$ on $S^1 \times D^2$ that fits together with the old contact structure $\xi$ (see for example the proof of Theorem 4.1.2 in \cite{Ge08}), but this contact structure is in general not unique. Therefore, the resulting contact structure $\xi_T(r)$ on the new manifold is also not unique and depends moreover highly on the chosen standard tubular neighborhood $\nu T$. As already remarked in the introduction this notion of contact Dehn surgery is more general than the usual one, since we allow gluing back arbitrary contact solid tori.

Here we are interested in so-called \textbf{cosmetic contact Dehn surgeries} along transverse knots, i.e. situations where $(M_T(r),\xi_T(r))$ is again contactomorphic to $(M,\xi)$. That cosmetic contact Dehn surgeries do exist follows from the following theorem due to Etnyre and Ghrist~\cite[Theorem 2.2]{EtGh99}.

\begin{thm} [Existence of cosmetic contact surgeries]\label{thm:etnyre-ghrist}
There exists a transverse unknot $T$ with self-linking number $\slf=-1$ in $(S^3,\xist)$, such that for every surgery coefficient $r\in\mathbb{Q}\cup\{\infty\}$ there exists a contact Dehn surgery along $T$ yielding a tight contact manifold. 
\end{thm}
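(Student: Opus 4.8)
The plan is to realize the transverse unknot geometrically, to reinterpret the surgery as a regluing of two contact solid tori, and then to control the glued contact structure by convex surface theory, invoking Eliashberg's uniqueness only at the very end for the cosmetic conclusion.

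First I would fix a concrete model for the unknot. The Reeb field of the defining $1$-form $\alpha = x_1\,dy_1 - y_1\,dx_1 + x_2\,dy_2 - y_2\,dx_2$ generates the Hopf flow on $S^3$, and I would take $U$ to be a single Hopf fiber; this is a transverse unknot, and a direct computation shows $\slf(U) = -1$, consistent with the Bennequin bound $\slf \le 2g-1 = -1$ for the unknot. The advantage of this model is that the complement of a Hopf fiber is itself a solid torus, so a standard tubular neighborhood $\nu U$ together with the exterior $S^3\setminus\mathring{\nu U}$ realizes the genus-one Heegaard splitting of $S^3$. In particular the exterior inherits from $\xist$ a universally tight contact structure on the solid torus, with convex boundary after a small perturbation.

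Next, for a prescribed slope $r = p/q$, the contact Dehn surgery of Definition~\ref{defi:cs} deletes $\nu U$ and glues a contact solid torus $(V_S,\xi_S)$ back along $\varphi$. Here I would exploit the generosity of the surgery notion used in this paper, which permits gluing back an \emph{arbitrary} contact solid torus: using Honda's classification of tight contact structures on the solid torus I would select a universally tight $\xi_S$ whose convex boundary carries dividing curves matching, through $\varphi$, the characteristic foliation that the tight exterior induces on $\partial(\nu U)$. Such a $V_S$ exists for every admissible boundary slope, so $\xi_S$ and the exterior structure patch together to a global contact structure $\xi_U(r)$ on $M_U(r)$, which topologically is the lens space $L(p,q)$ (with $L(1,q)=S^3$).

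The crux is to verify that $\xi_U(r)$ is tight. I would present $M_U(r)$ as a convex genus-one Heegaard decomposition into the two universally tight solid tori and then run the bypass and edge-rounding analysis of Giroux and Honda across the convex gluing torus: tracking the dividing set under $\varphi$ and choosing the boundary slope of $V_S$ so that no overtwisted bypass is forced, one identifies $\xi_U(r)$ with one of the standard tight contact structures on $L(p,q)$ furnished by that classification. This gluing-and-tightness step is the main obstacle, since it is precisely where one must exclude the slopes and dividing configurations that would create an overtwisted disk. Granting it, the theorem follows: for every $r$ the surgery yields a tight manifold. Finally, for the cosmetic statement, note that when $r = \pm 1/n$ the underlying manifold is $S^3$, so Eliashberg's uniqueness of tight contact structures on $S^3$ forces $\xi_U(r) = \xist$, producing the promised cosmetic contact surgeries.
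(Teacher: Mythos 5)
There is a genuine gap, and it sits exactly where you place your ``granting it.'' First, a contextual remark: the paper does not prove this statement at all --- it is imported verbatim from Etnyre--Ghrist \cite[Theorem~2.2]{EtGh99}, whose argument is dynamical rather than convex-surface-theoretic: they build contact forms on the surgered manifolds whose Reeb fields are tangent to a Seifert fibration of the resulting lens space and then invoke Hofer's theorem that an overtwisted contact structure must have a contractible periodic Reeb orbit. Your setup (Hopf fiber as the model for $U_{-1}$, complement a solid torus, surgery as regluing of a genus-one Heegaard splitting) matches their geometric picture, but your tightness step does not constitute a proof. Gluing two universally tight contact solid tori along the Heegaard torus of a lens space does \emph{not} in general produce a tight contact structure: the gluing torus is compressible on both sides, so the Colin-type gluing theorems do not apply, and convex surface theory (bypasses, edge-rounding, Honda's classification) is effective for classifying or obstructing tight structures, not for certifying that a given glued structure contains no overtwisted disk. ``Identifying $\xi_U(r)$ with one of the standard tight structures on $L(p,q)$'' presupposes the tightness you are trying to establish. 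A correct argument along these lines would have to exhibit an explicit tight model --- e.g.\ the universally tight $L(p,q)$ as a quotient of $(S^3,\xist)$, or a Stein-fillable structure from Legendrian surgery --- and then match its restriction to the Heegaard pieces with the exterior of $\nu U$ and the glued-in torus; that identification is the real content and is absent here. Note also that you have no freedom to ``choose the boundary slope of $V_S$'': once $\nu U$ and $r$ are fixed, the slope of the characteristic foliation seen from the new solid torus is determined, and only the tight extension over $V_S$ (if one compatible with tightness of the total space exists) is at your disposal.

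A second, related omission: the paper explicitly warns that this theorem holds only for a \emph{sufficiently large} standard tubular neighborhood of $T$, and is false for small ones (Section~\ref{section:proofDehnSurgery} even derives this failure from Theorem~\ref{thm:transverseSurgery}). Your argument never uses the size of $\nu U$; an argument that does not see this parameter would equally ``prove'' the statement for small neighborhoods, where it is false. Any repair must isolate where largeness of the neighborhood --- equivalently, the boundary slope of the exterior --- enters to rule out the overtwisted gluings.
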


Since Dehn surgery along an unknot with surgery coefficient of the form $1/n$, for $n\in\Z$, gives back $S^3$ and since on $S^3$ the standard contact structure $\xist$ is the only tight contact structure, it follows that there exist cosmetic contact surgeries from $(S^3,\xist)$ to itself for every surgery coefficient of the form $1/n$.

Observe also that transverse unknots in tight contact $3$-manifolds are completely classified by their self-linking numbers~\cite[Theorem 5.1.1]{El93}. Moreover, the possible values for the self-linking numbers are all negative odd integers. In particular, there is only one transverse unknot with self-linking number $\slf=-1$. 

In contrast to the result of Etnyre and Ghrist, is our main result, Theorem~\ref{thm:transverseSurgery}, saying that there exists no non-trivial cosmetic contact Dehn surgeries along all other transverse unknots.

Notice also, that the contact surgeries in Theorem~\ref{thm:etnyre-ghrist} are done with respect to a sufficiently big tubular neighborhood of the knot $T$ to ensure tightness of the resulting manifold as explained in~\cite[Remark~2.1]{EtGh99}. For small tubular neighborhoods Theorem~\ref{thm:etnyre-ghrist} is not true. This I will explain in more detail in Section~\ref{section:proofDehnSurgery}.

\subsection{Admissible and inadmissible transverse surgery}\hfill

As mentioned before, the result of contact Dehn surgery along a transverse knot is not unique. It depends on the chosen tubular neighborhood of the transverse knot and the extension of the contact structure over the glued-in solid torus. 

There exist also the notions of admissible and inadmissible transverse surgery which give a natural choice of contact structure on the surgered manifold (depending only on the tubular neighborhood in the case of admissible transverse surgery), see~\cite{Ga99,Ga02,BaEt13,Co15} for the precise definitions. From the work of Conway~\cite[Theorem~1.10]{Co15} it follows that all inadmissible transverse surgeries along transverse unknots in $(S^3,\xist)$ with self-linking number $\slf\leq-2$ yield overtwisted contact manifolds. 

This notions of contact Dehn surgery along transverse knots are more useful when studying properties of contact structures on fixed manifolds. However, to deduce that a transverse knot is determined by its exterior as in Section~\ref{section:proofKnotExterior} we need to consider all possible extensions of the contact structure over the glued-in solid torus.


\section{Proof of Theorem~\ref{thm:transverseSurgery}}
\label{section:proofDehnSurgery} 

In~\cite{Ke16} a similar theorem is proven for \textbf{Legendrian} knots, i.e. smooth knots always tangent to the contact structure. The proof here follows the same ideas.

We denote by $U_{\slf}$ the transverse unknot with self-linking number $\slf$. By Eliashberg's classification of transverse unknots~\cite[Theorem 5.1.1]{El93}, these transverse knots are unique and provide a complete list of transverse unknots.

The idea of the proof is now to consider, for every transverse unknot $U_{\slf}$ (other than $U_{-1}$) and every surgery coefficient of the form $r=1/n$, a Legendrian knot $L$ in the exterior $(S^3\setminus \mathring{\nu U_{\slf}},\xist)$ of $U_{\slf}$. This Legendrian knot can also be seen as a Legendrian knot in the new surgered contact manifold $(S^3_{U_{\slf}}(r),\xi_{U_{\slf}}(r))$. By Rolfsen's formula for the change of framings under a Rolfsen twist~\cite[Section~9.H]{Ro76}, the new \textbf{Thurston--Bennequin invariant} $\tb_{new}$ of the Legendrian knot $L$ in the new contact manifold can be computed as 
\begin{equation*}
\tb_{new}=\tb_{old}-n\lk^2(L,T),
\end{equation*} 
where $\tb_{old}$ is the old Thurston--Bennequin number of $L$ in $(S^3,\xist)$ and $\lk(L,T)$ denotes the linking-number between $L$ and $T$ in $(S^3,\xist)$ (see also~\cite{Ke16} for a generalization of these formula to general situations). Although the new contact structure $\xi_{T}(r)$ is not unique, the new Thurston--Bennequin invariant $\tb_{new}$ is.

A theorem by Eliashberg~\cite{El93} states that a contact manifold is \textbf{tight (not overtwisted)}, if and only if every nullhomologous Legendrian knot fulfills the \textbf{Bennequin inequality} $\tb\leq 2g-1$, where $g$ denotes the genus of the knot. The strategy is then to find a Legendrian knot that violates the Bennequin inequality in the new surgered manifold, which then cannot be contactomorphic to $(S^3,\xist)$.

We want to present these situations in diagrams, called front projections. Observe that $(S^3\setminus\{p\},\xist)$ is contactomorphic to $(\R^3,\ker x\,dy+dz)$. Therefore, one can present knots in $(S^3,\xist)$ in their \textbf{front projection} $(x,y,z)\mapsto(y,z)$. If one parametrizes a knot as $(x(t),y(t),z(t))$ then the contact condition tells us that the knot is Legendrian if and only if $\dot z+x\dot y=0$. And similar such a knot is transverse if and only if $\dot z+x\dot y>0$. This conditions give restrictions on the behavior of these knots in crossings and vertical tangencies in front projections~\cite[Chapter~3]{Ge08}.

For example in Figure~\ref{fig:classificationOfTransverseUnknots} the transverse unknots $U_{\slf}$ are pictured in their front-projections and in Figure \ref{fig:crossings} one can see the 4 possibilities of intersections of a Legendrian knot $L$ and a transverse knot $T$ in the front-projection. From the conditions before it follows immediately that the intersection of type $(2)$ and $(4)$ is uniquely determined, but in type $(1)$ and $(3)$ in general both is possible. But here it is enough to know the behavior in type $(2)$ and $(4)$ (see also~\cite{DuKe16} and~\cite{Ke17}).
\begin{figure}[htbp] 
\centering
\def\svgwidth{0,96\columnwidth}
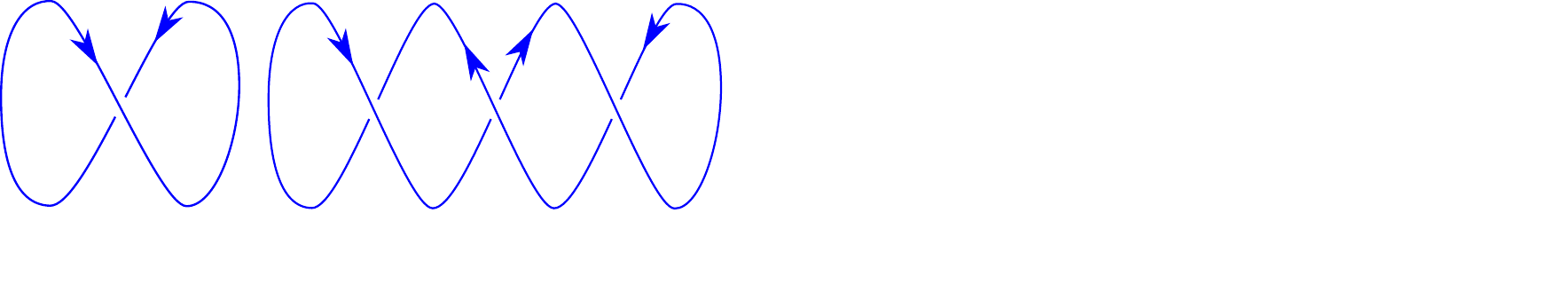
\caption{A complete list of transverse unknots in their front projections}
\label{fig:classificationOfTransverseUnknots}
\end{figure}	
\begin{figure}[htbp] 
\centering
\def\svgwidth{0,86\columnwidth}
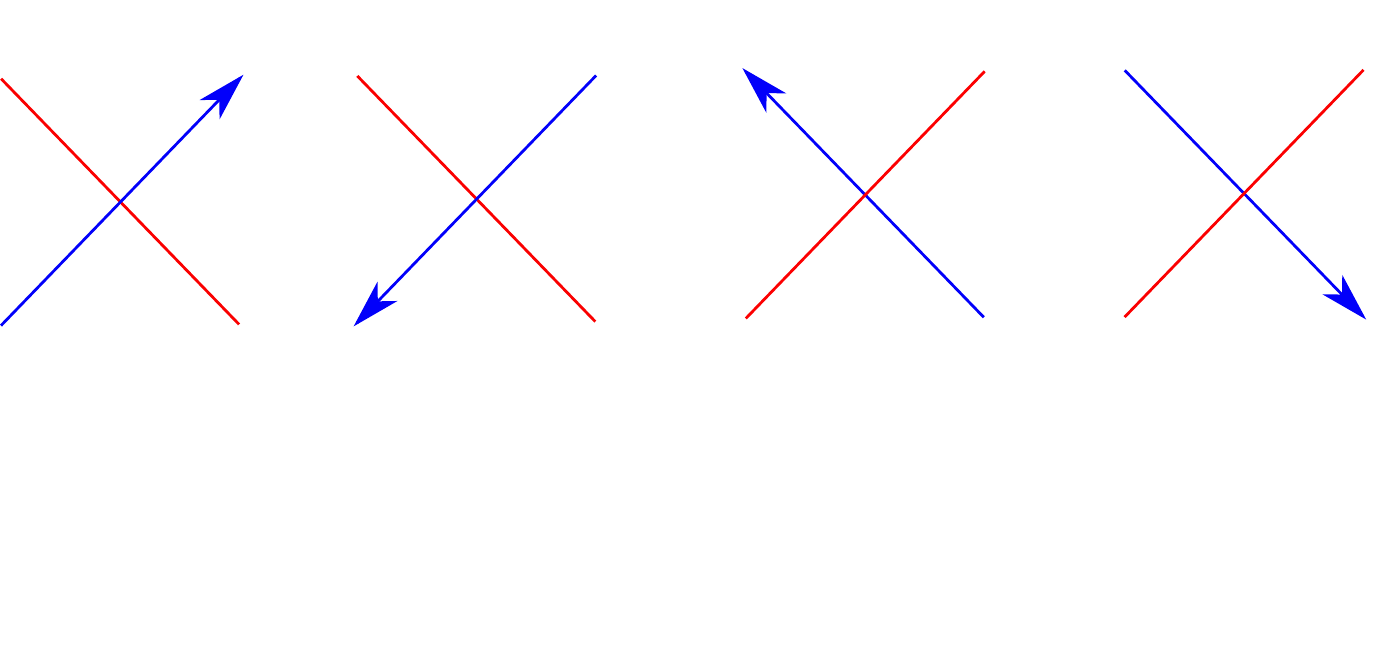
\caption{Front projections of Legendrian and transverse knots.}
\label{fig:crossings}
\end{figure}	

Consider now the front projection in Figure~\ref{fig:nsmallerzero} of a transverse unknot $T$ of type $U_{\slf}$ and a Legendrian unknot $L$ in the exterior of $T$. 
\begin{figure}[htbp] 
\centering
\def\svgwidth{0,86\columnwidth}
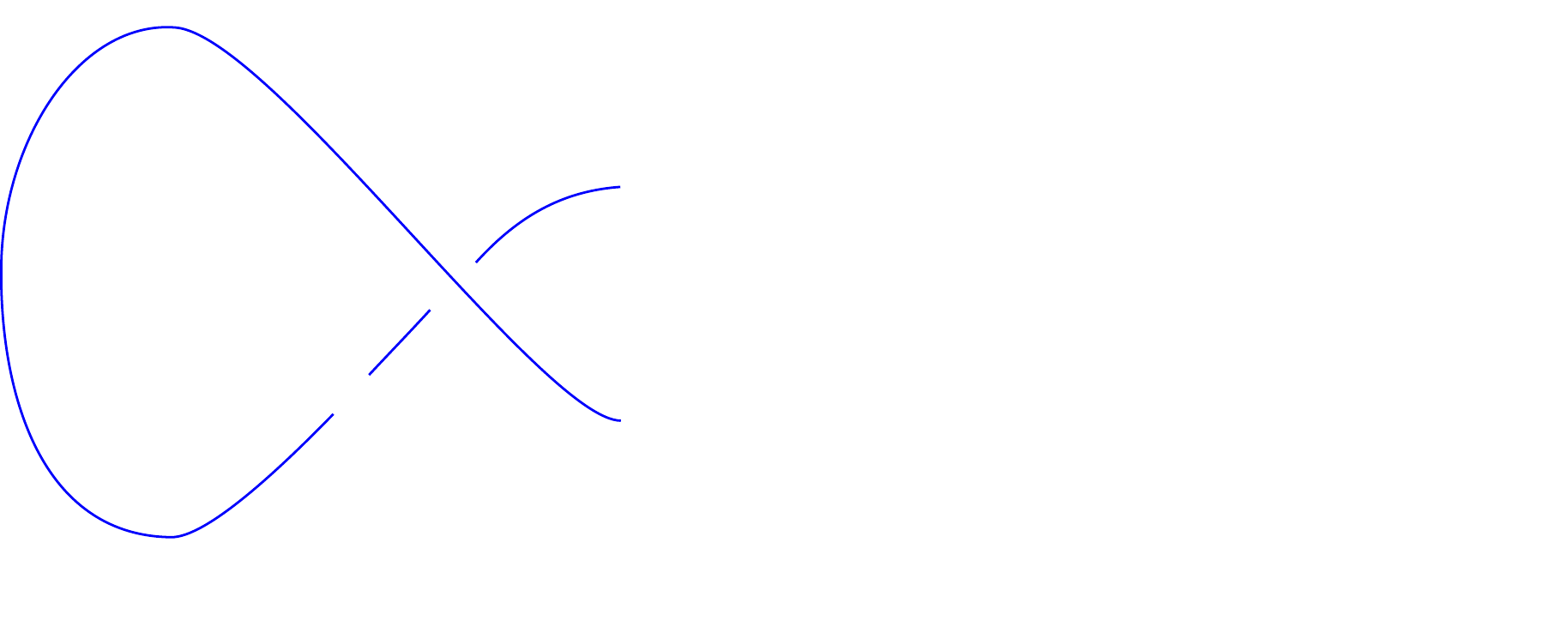
\caption{An overtwisted $3$-sphere for $n<0$.}
\label{fig:nsmallerzero}
\end{figure}	

If one does a $(1/n)$-surgery along $U_{\slf}$ then the knot $L$ is (topologically) again an unknot in some new contact $S^3$ (this can be seen by doing a \textbf{Rolfsen twist} along $U_{\slf}$, i.e. twisting $n$-times along the Seifert disk of $U_{\slf}$). If the resulting manifold is again $(S^3,\xi_{st})$ then the Bennequin inequality holds, i.e. $\tb_{new}\leq-1$. On the other hand we compute $\tb_{new}=-1-n$ with the above formula. So it follows that for $n<0$ the resulting contact $3$-sphere is not the standard one.
\begin{figure}[htbp] 
\centering
\def\svgwidth{0,85\columnwidth}
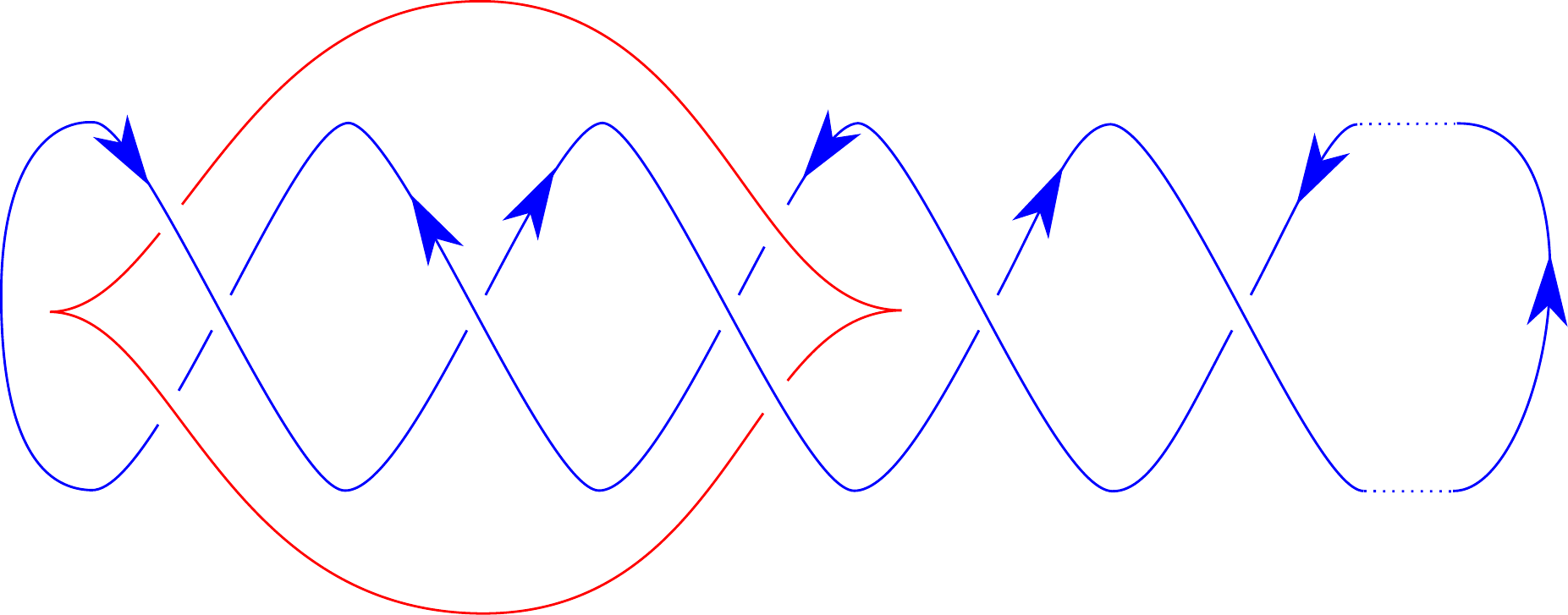
\caption{An overtwisted $3$-sphere for $n>0$.}
\label{fig:nbiggerzero}
\end{figure}	

For $n>0$ one considers the diagram from Figure~\ref{fig:nbiggerzero}. Observe that this does not work for the transverse unknot with self-linking number~$-1$, but for all other transverse unknots. The linking number is $\lk(T,L')=0$ and therefore the Thurston--Bennequin number of $L'$ stays the same, i.e. $\tb_{new}=\tb_{old}=-1$. 

To show that the resulting contact $3$-sphere is always overtwisted we first determine the new knot type $L'_{n}$ of $L'$ in the new $S^3$. Therefore, observe that topologically the link $T\sqcup L'$ is the Whitehead link, which can also be pictured like in Figure~\ref{fig:newlink}(i). By doing a Rolfsen twist along $T$ we get the new knot $L'_n$ in the new $S^3$ pictured in Figure~\ref{fig:newlink}(ii). 
\begin{figure}[htbp] 
\centering
\def\svgwidth{0,81\columnwidth}
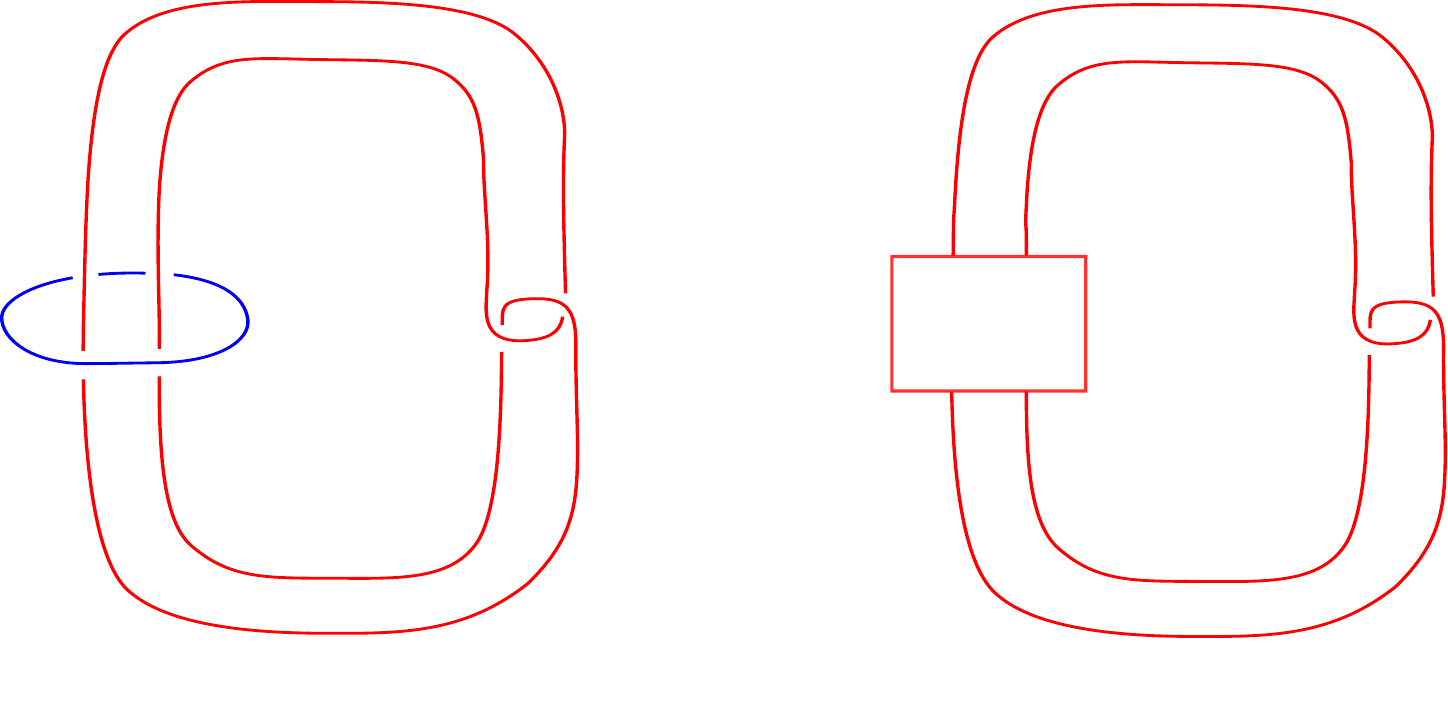
\caption{A Rolfsen twist along $T$ yields $L'_n$. Here the box represents $n$ negative full twists of the two parallel strands.}
\label{fig:newlink}
\end{figure}	

Of course here the Bennequin inequality cannot help because the minimal bound in the Bennequin inequality is $-1$. Therefore, one has to use a finer bound for the Thurston--Bennequin number. One such bound is the so-called \textbf{Kauffman bound}~\cite{Ru90,Fe02}, which says that for a Legendrian knot $L'$ in $(S^3,\xi_{st})$ the following inequality holds
\begin{equation*}
\tb(L')\leq\operatorname{min}\big\{\operatorname{deg}_x\big(F_{L'}(x,y)\big)\big\}-1,
\end{equation*} 
where $F_{L'}(x,y)$ is the Kauffman polynomial of $L'$. From the work of Tanaka~\cite{Ta06} and Yokota~\cite{Yo95} it follows that in the case of a reduced alternating knot diagram $D'_n$ of $L'_n$ (as is the case here) this inequality transforms to
\begin{equation*}
\tb(L'_n)\leq\slf(D'_n)-\operatorname{r}(D'_n)
\end{equation*} 
where $\slf(D'_n)$ is the self-linking number of the knot diagram $D'_n$ and $\operatorname{r}(D'_n)$ is the number of regions in the knot diagram $D'_n$. By coloring the complement of an alternating knot diagram according to the rule in Figure~\ref{fig:regions}(i) one gets the regions as the colored areas. (Moreover, in~\cite{Ta06} and~\cite{Yo95} it is shown that this bound is sharp, but this is not important for the argument here.) 
\begin{figure}[htbp] 
\centering
\def\svgwidth{0,85\columnwidth}
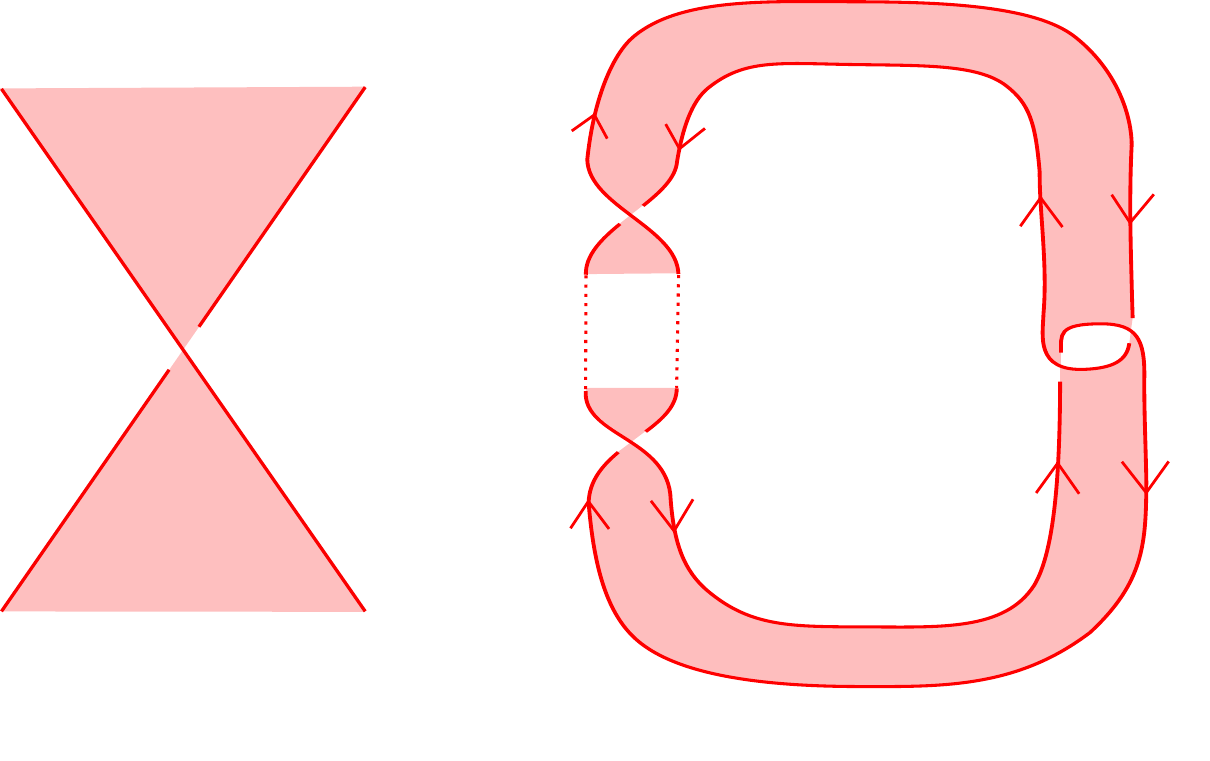
\caption{Computing the Kauffman bound for $L'_n$}
\label{fig:regions}
\end{figure}	

In Figure~\ref{fig:regions}(ii) one can count $\slf(D'_n)=2n-2$ and $\operatorname{r}(D'_n)=2n+1$. Therefore, every Legendrian realization of $L'_n$ in $(S^3,\xi_{st})$ has $\tb(L'_n)\leq-3$. But earlier we computed $\tb_{new}=\tb_{old}=-1$ in the new surgered contact $3$-sphere. Therefore this new contact $3$-sphere has to be overtwisted.

However, we are not done here. We still have to ensure that we can find Legendrian representatives of the Legendrian knots $L$ and $L'$ from Figures~\ref{fig:nsmallerzero} and~\ref{fig:nbiggerzero} sitting completely in the knot exteriors $(S^3\setminus\mathring{\nu U_{\slf}})$. 

It could happen that some tubular neighborhoods $\nu U_{\slf}$ are so large that they hit the Legendrian knots $L$ or $L'$. Then $L$ or $L'$ would not represent a Legendrian knot in the surgered manifold and the above argument would not work. 

In general, this can really happen as we can see as follows. If we assume that we can realize the Legendrian knot $L$ from Figure~\ref{fig:nsmallerzero} in all exteriors of $U_{-1}$, the above proof shows that there exists no cosmetic contact surgery along $U_{-1}$ with surgery coefficient of the form $r=1/n$ for $n<0$. This is a contradiction to the result of Etnyre--Ghrist (Theorem~\ref{thm:etnyre-ghrist}). It follows that the Legendrian knot $L$ from Figure~\ref{fig:nsmallerzero} cannot be realized in all exteriors of $U_{-1}$.

On the other hand, we will show next that for a stabilized transverse unknot $T$ we can always realize the Legendrian knots  $L$ and $L'$ in all of its exteriors $(S^3\setminus\mathring{\nu T},\xist)$.

For that, let $(\nu T,\xist)$ be a standard tubular neighborhood of $T$. We choose an identification of $(\nu T,\xist)$ with
\begin{equation*}
\big(S^1\times D_\varepsilon^2,\ker(d\theta+ r^2\,d\varphi)\big)
\end{equation*}
such that $S^1\times\{\text{pt}\}$ corresponds to the surface longitude $\lambda$ of $T$. 

First, we want to show that $\varepsilon\leq1$. For that, let $k$ be the maximal natural number such that $\varepsilon^2>1/k$. The characteristic foliation on 
\begin{equation*}
\partial\big(S^1\times D^2_{1/\sqrt{k}}\big)\subset\mathring{\nu T}
\end{equation*}
is given by parallel linear curves of slope $\lambda-k\mu$. By a $C^{\infty}$-close isotopy inside $\mathring{\nu T}$ we can transform $\partial(S^1\times D^2_{1/\sqrt{k}})$ to a convex surface with two parallel dividing curves of slope $\lambda-k\mu$~\cite[Example~4.8.10]{Ge08}. This convex surface bounds a solid torus $V$ inside $\mathring{\nu T}$. 

By Honda's classification of tight contact structures on solid tori with prescribed boundaries~\cite[Theorem~2.3]{Ho00} the contact structure on $V$ is unique and its spine $L$ is a Legendrian push-off of $T$ with $\tb=-k$. On the other hand, we know that the classical invariants of a transverse knot $T$ and of its Legendrian push-offs $L$ are related by
\begin{equation*}
\slf(T)=\tb(L)-\rot(L).
\end{equation*}
If we now assume that $\varepsilon$ is larger than $1$, we get $k=1$. By the above discussion it follows that $L$ is a Legendrian unknot with $\tb(L)=-1$. It follows that $\rot(L)=0$ and therefore its transverse push-off $T$ has self-linking number $\slf(T)=-1$ and would not be stabilized. Altogether, we conclude that $\varepsilon\leq1$ for all standard tubular neighborhoods of stabilized Legendrian unknots.

Working backwards, we see that for a transverse unknot with $\slf=-1$ there really exist standard tubular neighborhoods of size $\varepsilon\geq1$.

Next, let $k$ be the natural number such  that
\begin{equation*}
\frac{1}{k}\geq\varepsilon^2>\frac{1}{k+1}.
\end{equation*}
As above we consider the surface $\partial(S^1\times D^2_{1/\sqrt{k}})$ and perturb it to a convex surface $\partial V$ with two parallel dividing curves of slope $\lambda-k\mu$. But this time $\nu T$ is contained in $V$ and and we need to show that we can assume $\partial V$ sitting in the exteriors. For that we take a transverse unknot $U_{-1}$ with $\slf=-1$ and tubular neighborhood $\nu U_{-1}$ of size $\varepsilon\geq1$. We can perform stabilizations inside $\nu U_{-1}$ to obtain the stabilized unknot $T$ such that $\nu T\subset V\subset \nu U_{-1}$. It follows that we can assume $\partial V$ sitting in the exteriors of $T$. Therefore, it is sufficient to realize the Legendrian knot $L$ from Figure~\ref{fig:nsmallerzero} on $\partial V$.

For that, we chose a Legendrian realization $L$ of $\mu$ on $\partial V$~\cite[Theorem~3.7]{Ho00}. From the Bennequin inequality it follows that $\tb(L)\leq1$. By~\cite[Proposition~3.1]{Ho00} we can make a meridional disk $D^2$ with $\partial D^2=\mu$ inside $V$ convex. Since the characteristic foliation on $\partial V$ consists of two parallel curves of slope $\lambda-k\mu$ the characteristic foliation $\Gamma$ on $D^2$ is a single arc with endpoints on $L$, see also~\cite[Proposition~4.3]{Ho00}. The Thurston-Bennequin invariant of $L$ computes as
\begin{equation*}
\tb(L)=-\frac{1}{2} \#(L\cap \Gamma)=-1.
\end{equation*}   
It follows that $L$ is a Legendrian meridian of $T$ with $\tb=-1$ sitting completely in the exteriors of $T$ and therefore representing the Legendrian knot $L$ from Figure~\ref{fig:nsmallerzero}.

It remains to show that we also can realize the Legendrian knot $L'$ from Figure~\ref{fig:nbiggerzero} in all exteriors of $T$. For that, observe that $L'$ is a connected sum (along some band) of two copies of $L$. We take two copies $L_1$ and $L_2$ of $L$ in $(S^3\setminus\mathring{\nu T},\xist)$ as in Figure~\ref{fig:connectedsum}(i) and perform a connected sum away from $\nu T$ as in Figure~\ref{fig:connectedsum}(ii). By applying a few Legendrian Reidemeister moves away from $\nu T$ we see that the resulting Legendrian knot is isotopic to $L'$ in the knot exterior of $T$. \hfill$\square$

\begin{figure}[htbp] 
	\centering
	\def\svgwidth{0,99\columnwidth}
	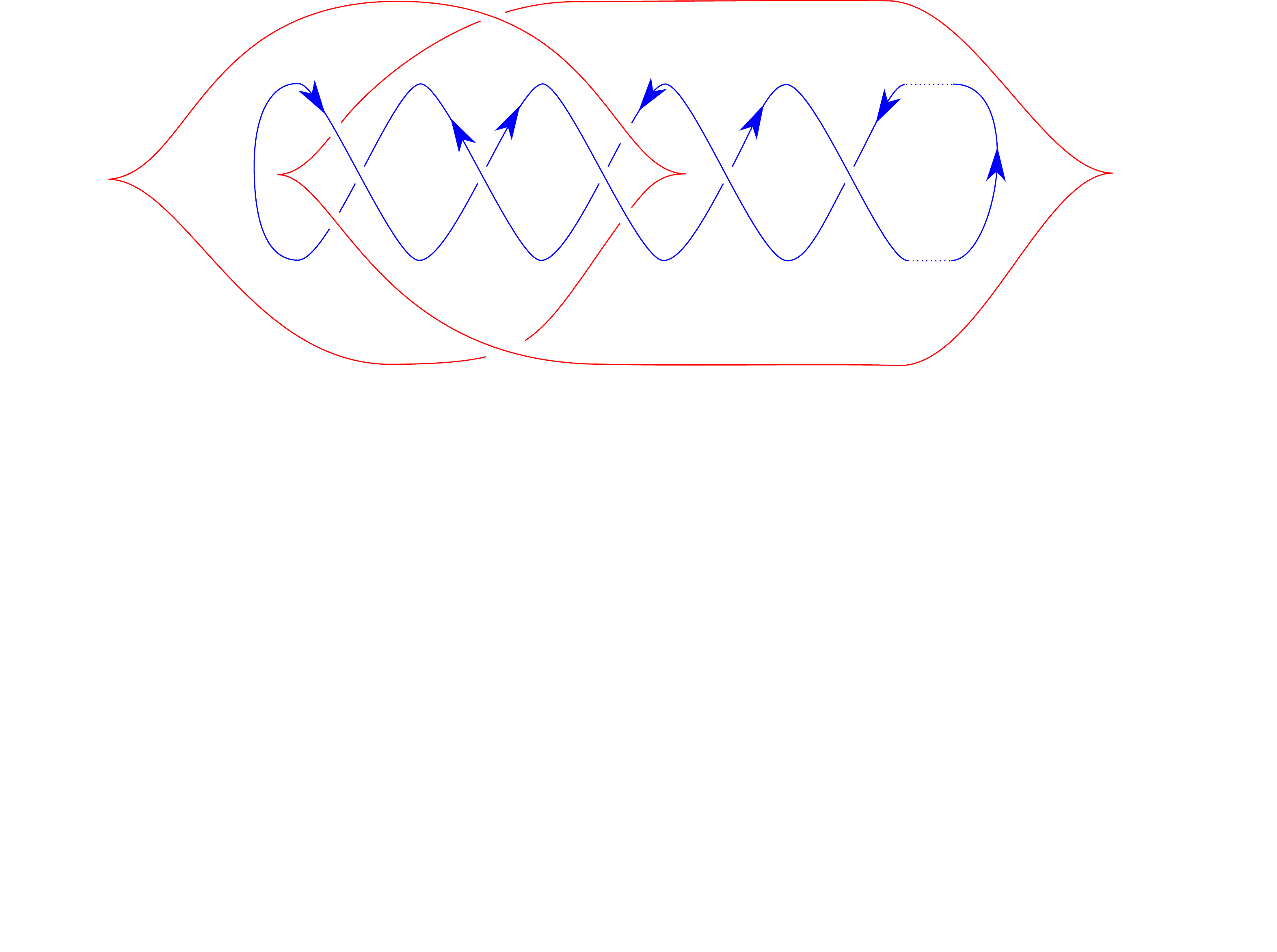
	\caption{Constructing $L'$ as a connected sum of two copies of $L$}
	\label{fig:connectedsum}
\end{figure}


\section{Proof of Corollary~\ref{thm:transverseSurgery2} and~\ref{cor:transverse}}
\label{section:proofKnotExterior} 

The transverse surgery theorem follows now easily from Theorem~\ref{thm:transverseSurgery} and the topological result by Gordon and Luecke~\cite{GoLu89}.

\begin{proof}[Proof of Corollary~\ref{thm:transverseSurgery2}]\hfill\\
	By the theorem of Gordon and Luecke~\cite{GoLu89} $T$ is topological equivalent to the unknot and an easy computation in homology shows that the surgery coefficient has to be of the form $r=1/n$, for $n\in \Z$. Using Theorem~\ref{thm:transverseSurgery} the result follows.	
\end{proof}

The proof of the transverse knot exterior theorem works now very similar as for topological or Legendrian knots. Compare~\cite{Ke16,Ke17}.

\begin{proof}[Proof of Corollary~\ref{cor:transverse}]\hfill\\
Pick a contactomorphism between the exteriors
\begin{equation*}
h\colon\big(S^3\setminus\mathring{\nu T_1},\xi_{st}\big)\longrightarrow \big(S^3\setminus\mathring{\nu T_2},\xi_{st}\big),
\end{equation*}
and then consider the following commutative diagram:\\
\hfill\\
\begin{xy}
(0,40)*+{\big(S^3,\xi_{st}\big)}="a";(8,40)*+{\cong}="a1"; (24,40)*+{\big(S^3_{T_1}  (\mu_1),\xi_{T_1}  (\mu_1)\big)}="b"; (40,40)*+{:=}="b1"; (52,40)*+{\big(S^1\times D^2,\xi_S\big)}="c"; (75,40)*+{+}="d"; (100,40)*+{\big(S^3\setminus\mathring{\nu T_1},\xi_{st}\big)}="e";(114,40)*+{\big/_\sim}="e1";%
(53,35)*+{\mu_0}="f"; (99,35)*+{\mu_1}="g";%
(75,20)*+{\circlearrowright}="k1";%
(53,5)*+{\mu_0}="l"; (92,5)*+{r_2:=h(\mu_1)}="m";%
(24,0)*+{\big(S^3_{T_2}  (r_2),\xi_{T_2}  (r_2)\big)}="n";(40,0)*+{:=}="n1"; (52,0)*+{\big(S^1\times D^2,\xi_S\big)}="o"; (75,0)*+{+}="p"; (100,0)*+{\big(S^3\setminus\mathring{\nu T_2},\xi_{st}\big)}="q";(114,0)*+{\big/_\sim}="q1";%
{\ar@{-->}_{f} "b";"n"};{\ar@{->}@/_1pc/ _{\operatorname{Id}} "c";"o"};{\ar@{->}@/^1pc/^h "e";"q"};%
{\ar@{|->}_{\varphi_1} "f";"g"};{\ar@{|->}^{h\circ\varphi_1} "l";"m"};
\end{xy}\\
\hfill\\
Here the contact solid torus $(S^1\times D^2,\xi_S)$ is chosen to be equal to $(\nu T_1,\xi_{st})$. Because $\operatorname{Id}$ and $h$ on the two factors are contactomorphisms and because they both send the characteristic foliations of the boundaries to each other, these two maps glue together to a contactomorphism $f$ of the whole contact manifolds~\cite[Section~2.5.4]{Ge08}. From the transverse contact Dehn surgery theorem~\ref{cor:transverse} it follows that $r_2$ is equal to $\mu_2$, or $T_2$ is equivalent to the transverse unknot $U_{-1}$ with self-linking number $\slf=-1$.

If $r_2=\mu_2$ then this is a trivial contact Dehn surgery, and so the contactomorphism $f$ maps $T_1$ to $T_2$.

In the other case the same argument with $T_1$ and $T_2$ reversed shows that $T_1$ is also equivalent to the transverse unknot $U_{-1}$. 
\end{proof}


\section{Links in $S^3$ and knots in general $3$-manifolds}
\label{section:open}

\subsection{The transverse link complement problem}\hfill

An obvious question is if the transverse knot exterior theorem is also true for links. It will turn out that this is not the case. There exist non-equivalent transverse $2$-component links in $(S^3,\xist)$ with contactomorphic exteriors. The idea for constructing such examples is the same as for topological links~\cite{Wh37}.

\begin{ex}[Counterexample for transverse links]
Consider the so-called Whitehead link from Figure~\ref{fig:newlink}(i). Chose $T$ in $(S^3,\xist)$ to be a transverse unknot with $\slf=-1$ with standard tubular neighborhood from Theorem~\ref{thm:etnyre-ghrist} of Etnyre-Ghrist. So that a contact $(1/n)$-Dehn surgery along $T$ with respect to this standard tubular neighborhood yields again $(S^3,\xist)$. Next chose a transverse realization $T'$ of $L'$ in the exterior of $T$ (with respect to the chosen tubular neighborhood of $T$).

A contact $(1/n)$-Dehn surgery along $T$ yields $(S^3,\xist)$ back but the transverse knot $T'$ changes to a transverse knot with underlying topological knot type $L'_n$ from Figure~\ref{fig:newlink}(ii) which is clearly non-equivalent to $L'$. Moreover from the proof of Theorem~\ref{thm:etnyre-ghrist} it follows directly that the new glued-in solid torus is the standard tubular neighborhood of a new transverse knot $T_n$. 

Therefore, the transverse links $T\sqcup T'$ and $T_n\sqcup T'_n$ have contactomorphic exteriors but are not equivalent (since they are not even topologically equivalent).

Observe, that we can construct like this infinitely many pairwise non equivalent transverse $2$-component links that have exteriors that are all contactomorphic.
\end{ex}

\subsection{The transverse knot complement problem in general manifolds}\hfill

We can also ask the same question in general contact manifolds: Given two transverse knots $T_1$ and $T_2$ in a closed contact $3$-manifold $(M,\xi)$ with contactomorphic exteriors. Are $T_1$ and $T_2$ equivalent?

In the topological category the following easy construction is working. Take two solid tori and glue them together along their boundaries to obtain the standard genus-$1$ Heegaard splitting of a lens space. The spines of these solid tori have homeomorphic exteriors by construction but it is easy to compute that they are orientation preserving equivalent if and only if $q^2\equiv 1\mod{p}$. In fact, these are the only known examples of knots with homeomorphic exteriors that are not orientation preserving equivalent. See~\cite{Ro93} or~\cite[Section~1.3]{Ke17} for details.

The same construction does not work for transverse knots in contact manifolds.

\begin{ex}[Gluing standard neighborhood of transverse knots]
Consider a standard neighborhood $\nu T$ of a transverse knot $T$ with size $\varepsilon^2={x}/{y}$. The characteristic foliation on the boundary $\partial(\nu T)$ is given by the vector field 
\begin{equation*}
X=\frac{x}{y}\, \partial_\theta-\,\partial_\varphi .
\end{equation*}
If one denotes the longitude $S^1\times \{p\}$ of $\partial(\nu T)$ by $\lambda$, then the characteristic foliation of $\partial(\nu T)$ is given by parallel linear curves of slope $x\lambda-y\mu$.

Next, we take two contactomorphic copies $(V_1,\xi_1)$ and $(V_2,\xi_2)$ of such a standard tubular neighborhood. After a possible change of the longitudes we see that this is the case if and only if the slopes of both boundary characteristic foliations agree, i.e.\ the characteristic foliation is given by linear curves of the form $x\lambda_i-y\mu_i$.

Now we glue the solid tori $V_1$ and $V_2$ together to obtain a lens space $L(p,q)$ as follows
\\
\hfill\\
\begin{xy}
(-14,12)*+{ }="a0";(7,12)*+{L(p,q)}="a5";(17,12)*+{=}="a1"; (25,12)*+{V_1}="c"; (45,12)*+{+}="d"; (65,12)*+{V_2}="e";(80,12)*+{\big/_\sim}="e1";%
(25,6)*+{\mu_1}="f"; (65,6)*+{q\mu_2-p\lambda_2,}="g";%
(25,0)*+{\lambda_1}="h"; (65,0)*+{r\mu_2+s\lambda_2,}="i";%
{\ar@{|->} "f";"g"};
{\ar@{|->} "h";"i"};
\end{xy}\\
\hfill\\
where we require the gluing map to be orientation reversing, i.e.\
\begin{equation*}
\det(A)=\det\left(\begin{matrix}
s & p\\
-r &q
\end{matrix}\right)=-1.
\end{equation*}
The contact structures $\xi_1$ and $\xi_2$ on the solid tori glue together to a global contact structure on the lens space $L(p,q)$ if the gluing map preserves the boundary characteristic foliations. This is the case if and only if
\begin{equation*}
A\left(\begin{matrix}
x\\
y
\end{matrix}\right)=\left(\begin{matrix}
x\\
y
\end{matrix}\right).
\end{equation*}
Every gluing map that satisfies the above condition yields a contact structure on a lens space, such that the transverse spines of its standard genus-$1$ Heegaard splitting have contactomorphic exteriors. However, in all these examples the transverse knots are actually equivalent, as we can see as follows.

From the above equations, we conclude that the eigenvalues of $A$ have to be $+1$ and $-1$ and therefore
\begin{equation*}
 0=\operatorname{trace}(A)=q+s.
\end{equation*}
It follows that $q^2=1-pr\equiv 1 \mod{(p)}.$ But under this condition we can construct a diffeomorphism that interchanges both solid torus. In fact, it is easy to compute that this diffeomorphism is also a contactomorphism that maps on transverse spine to the other.
\end{ex}

It remains open if we can find counterexamples  to Corollary~\ref{cor:transverse} in general contact manifolds. It would also be interesting to know what happens in overtwisted contact $3$-spheres.

\begin{ques}[Transverse knot exterior problem in overtwisted $3$-spheres]
Let $T_1$ and $T_2$ be knots in an overtwisted contact $3$-sphere $(S^3,\xiot)$ with contactomorphic complements $(S^3\setminus T_i,\xiot)$. Is $T_1$ equivalent to $T_2$?
\end{ques}

Recall, that in overtwisted $3$-spheres coarse-equivalence is a weaker condition than isotopy of transverse links~\cite{Vo16}.

\subsection{Complements and exteriors}\hfill

As already mentioned in the introduction from the work of Edwards~\cite{Ed64} it follows that for topological knots two knot complements are homeomorphic if and only if the knot exteriors are homeomorphic. For transverse knots in contact manifold this is not clear (see~\cite[Section~6.4]{Ke17} for more discussion on this problem). In particular, the knot complement problem for transverse knots in $(S^3,\xist)$ remains open.

\begin{ques}[Transverse knot complement problem]
	Let $T_1$ and $T_2$ be knots in $(S^3,\xist)$ with contactomorphic complements $(S^3\setminus T_i,\xist)$. Is $T_1$ equivalent to $T_2$?
\end{ques}





\end{document}